\theoremstyle{plain}
\newtheorem{theorem}{Theorem}[section]
\newtheorem{proposition}[theorem]{Proposition}
\newtheorem{assumption}[theorem]{Assumption}
\newtheorem{lemma}[theorem]{Lemma}
\newtheorem{corollary}[theorem]{Corollary}
\theoremstyle{definition}
\newtheorem{definition}[theorem]{Definition}
\newtheorem{remark}[theorem]{Remark}
\title{Existence and uniqueness of invariant measures for non-Feller Markov semigroups}
\author{
Jean-Gabriel Attali\thanks{%
Affiliation, address. Email: \texttt{jean-gabriel.attali@devinci.fr}}
}
\date{} 
\begin{document}
\maketitle

\begin{abstract}
We study existence and uniqueness of invariant probability measures for
continuous-time Markov processes on general state spaces. Existence is obtained
from tightness of time averages under a weak regularity assumption inspired by
quasi-Feller semigroups, allowing for discontinuous and non-Feller dynamics.

Our main contribution concerns uniqueness. Under a natural $\psi$-irreducibility
assumption, we show that the normalized resolvent kernel satisfies a domination
property with respect to a reference measure. As a consequence, every invariant
probability measure charges this reference measure. Since distinct ergodic
invariant measures are mutually singular on standard Borel spaces, this domination
property implies uniqueness whenever an invariant probability measure exists.

The argument is purely measure-theoretic and does not rely on Harris recurrence,
return-time estimates, or Foster--Lyapunov conditions, and applies in particular
to jump processes and hybrid models with discontinuous dynamics.
\end{abstract}

\section{Introduction}

The existence and uniqueness of invariant probability measures are fundamental
problems in the study of continuous-time Markov processes. Invariant measures
provide the natural candidates for the long-time statistical description of
stochastic dynamics and play a central role in probability theory, stochastic
analysis and applications. Although existence and uniqueness are often treated
within the same framework, they are conceptually distinct questions, driven by
different mechanisms.

The existence of an invariant probability measure is commonly established by
compactness arguments based on tightness of time averages. In continuous time,
this approach relies on the tightness of the occupation measures
\[
\Pi_T(x,\cdot)=\frac{1}{T}\int_0^T P_t(x,\cdot)\,dt,
\]
which may be obtained from Lyapunov functions or stability conditions (see,
e.g., Has'minskii~\cite{Hasminskii1980} or Ethier and Kurtz~\cite{EthierKurtz1986}).
These methods are flexible and apply to a broad range of models, including
diffusions, jump processes and piecewise deterministic Markov processes. In
particular, existence may hold even when the transition semigroup fails to
satisfy the classical Feller property.

Uniqueness of invariant measures is traditionally derived from stronger dynamical
assumptions. A central tool is Harris recurrence, which combines irreducibility
with quantitative return-time estimates and yields uniqueness together with strong
ergodic properties. This theory, initiated by Harris~\cite{Harris1956}, is
systematically exposed in the monographs of Meyn and Tweedie~\cite{MeynTweedie2009}
and Nummelin~\cite{Nummelin1984}; see also Down, Meyn and Tweedie~\cite{DownMeynTweedie1995}.
Related approaches based on regeneration, petite sets or minorization conditions
have become standard in the modern theory of Markov processes.

Despite their effectiveness, recurrence-based methods often entangle existence,
uniqueness and ergodic properties within a single framework. In many situations,
uniqueness is obtained as a consequence of positive Harris recurrence together
with additional quantitative or topological assumptions. For several important
classes of models, however, establishing such properties may be technically
delicate, especially for processes with discontinuous dynamics, degenerate noise
or non-Feller transition semigroups (see, e.g., Stroock and Varadhan~\cite{StroockVaradhan1979},
Duflo~\cite{Duflo1997}, and the references therein).

In earlier work~\cite{Attali2004}, a topological approach was developed to address
existence and ergodic properties for a class of non-Feller models, termed
\emph{quasi-Feller}, ... allowing for strong discontinuities of the transition operator. That
approach relies on tightness and essential regularity arguments and yields
existence and, under additional assumptions, positive Harris recurrence. However,
it does not isolate the mechanism responsible for uniqueness itself.

The purpose of the present paper is to revisit the problem of uniqueness from a
different perspective. We show that, in continuous time, uniqueness of invariant
probability measures is a purely structural consequence of irreducibility, once
existence has been established by independent arguments. Our approach does not
rely on Harris recurrence, return-time estimates, Foster--Lyapunov conditions, or
regularity assumptions such as the (strong) Feller property.

The key idea is to exploit the \emph{resolvent} of the Markov semigroup as a
temporal averaging operator. The resolvent transforms reachability in finite time
into a one-step positivity property at the level of measures. Under a natural
$\psi$-irreducibility assumption, we show that any invariant probability measure
necessarily dominates a common reference measure. Since distinct ergodic
invariant probability measures are mutually singular on standard Borel spaces
(see, e.g., Petersen~\cite{Petersen1983} or Walters~\cite{Walters1982}), this
domination property rules out the coexistence of several ergodic components and
implies uniqueness.

The paper is organized as follows. In Section~\ref{sec:existence}, we address the
existence of invariant probability measures under an essential regularity
assumption, following the topological and measure-theoretic approach initiated
in~\cite{Attali2004}. This part is independent of irreducibility and uniqueness
considerations. In Section~\ref{sec:uniqueness}, we establish uniqueness by
introducing a domination criterion and applying it to the normalized resolvent of
a $\psi$-irreducible Markov semigroup.

\paragraph*{Acknowledgements.}
The author thanks Jeffrey Steif for helpful discussions and for drawing attention
to questions related to absorbing decompositions in interacting particle systems.

\section{Existence of invariant probability measures under quasi--Feller regularity}\label{sec:existence}

Throughout this section, $S$ denotes a Polish space endowed with its Borel
$\sigma$-field $\mathcal{B}(S)$. We write $C_b(S)$ for the set of bounded
continuous real-valued functions on $S$, and $\mathcal{P}(S)$ for the set of
Borel probability measures on $S$.

\subsection{Markov semigroups and time averages}\label{subsec:semigroup}

Let $(P_t)_{t\ge 0}$ be a Markov semigroup on $S$, i.e.\ for each $t\ge 0$,
$P_t$ is a Markov transition kernel on $(S,\mathcal{B}(S))$ and
\[
P_{t+s}(x,A)=\int_S P_t(y,A)\,P_s(x,dy),
\qquad t,s\ge 0,\ x\in S,\ A\in\mathcal{B}(S),
\]
with $P_0(x,\cdot)=\delta_x$.
For $f\in \mathcal{B}_b(S)$, we use the notation
\[
P_tf(x)=\int_S f(y)\,P_t(x,dy).
\]

A probability measure $\mu\in\mathcal{P}(S)$ is said to be invariant for
$(P_t)_{t\ge 0}$ if
\[
\mu P_t=\mu,\qquad \forall t\ge 0,
\]
that is, $\int_S P_tf\,d\mu=\int_S f\,d\mu$ for all $f\in\mathcal{B}_b(S)$.

For any $x\in S$ and $T>0$, we define the time-averaged measures
\begin{equation}\label{eq:time-average}
\Pi_T(x,\cdot):=\frac{1}{T}\int_0^T P_t(x,\cdot)\,dt\in\mathcal{P}(S).
\end{equation}

\subsection{Quasi--Feller factorization via a map $H$}\label{subsec:qf}

We use a quasi--Feller regularity assumption in the sense of \cite{Attali2004},
where discontinuities are allowed through a factor map $H$, while a Feller structure
holds after factorization.

Let $\mathcal{K}$ denote the family of compact subsets of $S$. For a Borel map
$H:S\to W$ between Polish spaces, we write $D_H$ for the set of discontinuity points
of $H$.

\begin{assumption}[Quasi--Feller factorization]\label{ass:qf-step}
There exist a Polish space $W$, a measurable map $H:S\to W$, and for each $t>0$ a
Markov kernel $Q_t$ on $W$ such that:
\begin{enumerate}
\item for every $t>0$ and every $f\in C_b(S)$, the function $Q_t f$ defined by
\[
Q_t f(w):=\int_S f(y)\,Q_t(w,dy)
\]
belongs to $C_b(W)$;
\item for every $t>0$ and every $f\in C_b(S)$,
\[
P_t f = Q_t f\circ H;
\]
\item for every $t>0$,
\[
Q_t(w,D_H)=0 \qquad \forall\, w\in \bigcup_{K\in\mathcal K}\overline{H(K)},
\]
where $D_H$ denotes the discontinuity set of $H$ and $\mathcal K$ the family of
compact subsets of $S$.
\end{enumerate}
\end{assumption}

\subsection{A technical lemma for $\mu$-a.s.\ continuous test functions}\label{subsec:as-continuous-lemma}

\begin{lemma}[Weak convergence and $\mu$-a.s.\ continuity]\label{lem:as-continuous}
Let $(\mu_n)_{n\ge 1}$ and $\mu$ be probability measures on $S$ such that
$\mu_n\Rightarrow \mu$ weakly. Let $h:S\to\mathbb{R}$ be bounded and measurable.
Assume that $h$ is $\mu$-almost surely continuous, i.e.\ there exists a Borel set
$D\subset S$ such that $\mu(D)=0$ and $h$ is continuous on $S\setminus D$.
Then
\[
\lim_{n\to\infty}\int_S h\,d\mu_n=\int_S h\,d\mu.
\]
\end{lemma}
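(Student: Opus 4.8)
The plan is to reduce the statement to a pointwise convergence result via the Skorokhod representation theorem, which is available precisely because $S$ is Polish. Since $\mu_n\Rightarrow\mu$ weakly on $S$, Skorokhod's theorem provides a probability space $(\Omega,\mathcal{F},\Prob)$ carrying $S$-valued random variables $X_n$ and $X$ such that $X_n$ has law $\mu_n$, $X$ has law $\mu$, and $X_n\to X$ holds $\Prob$-almost surely. This converts the weak convergence of the measures into almost sure convergence of the underlying random variables, at the harmless cost of changing the probability space.

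Next I would exploit the $\mu$-almost sure continuity of $h$ to pass the limit through $h$. Since $X$ has law $\mu$ and $\mu(D)=0$, the event $\{X\in D\}$ has probability zero, so $\Prob$-almost surely $X\in S\setminus D$, where $h$ is continuous. On the almost sure event $\{X_n\to X\}\cap\{X\notin D\}$, continuity of $h$ at the limit point $X$ gives $h(X_n)\to h(X)$. Hence $h(X_n)\to h(X)$ $\Prob$-almost surely.

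Finally, since $h$ is bounded, say $|h|\le M$, the sequence $h(X_n)$ is uniformly bounded, and the dominated convergence theorem yields
\[
\int_S h\,d\mu_n=\E\bigl[h(X_n)\bigr]\longrightarrow\E\bigl[h(X)\bigr]=\int_S h\,d\mu,
\]
where the two outer equalities are just the change of variables under the laws $\mu_n$ and $\mu$.

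I expect the only delicate point to be the verification that $h(X_n)\to h(X)$ almost surely: it rests on combining the a.s.\ convergence $X_n\to X$ with the fact that, almost surely, the limit $X$ avoids the discontinuity set $D$, so that continuity of $h$ \emph{at the limit point} applies. An alternative route that avoids Skorokhod would invoke the Portmanteau theorem together with upper and lower semicontinuous regularizations of $h$, using $\mu(D)=0$ to sandwich $\limsup_n\int h\,d\mu_n$ and $\liminf_n\int h\,d\mu_n$ between copies of $\int h\,d\mu$; this is more computational, and the Skorokhod argument is cleaner.
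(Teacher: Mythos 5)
Your proof is correct, but it follows a genuinely different route from the paper's. You invoke the Skorokhod representation theorem (available since $S$ is Polish) to replace weak convergence of $(\mu_n)$ by almost sure convergence $X_n\to X$ of representing random variables, observe that $\Prob(X\in D)=0$ so that almost surely $h$ is continuous at the limit point $X$, deduce $h(X_n)\to h(X)$ a.s., and conclude by bounded convergence. The paper instead argues by truncation: it takes an open set $U\supset D$ with $\mu(U)<\varepsilon$ and $\mu(\partial U)=0$, sets $h_\varepsilon:=h\,\mathbf{1}_{S\setminus U}$, controls $|h-h_\varepsilon|\le M\mathbf{1}_U$ via Portmanteau applied to the open set $U$, and passes to the limit in $\int h_\varepsilon\,d\mu_n$ using that the discontinuities of $h_\varepsilon$ lie in $\partial U$. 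Your argument is shorter and self-contained once Skorokhod is granted, whereas the paper's is more elementary in its toolbox (regularity of $\mu$ plus Portmanteau only) and would survive in settings where a Skorokhod representation is not available; on the other hand the paper's final step still appeals to a Portmanteau-type convergence for the merely $\mu$-a.s.\ continuous function $h_\varepsilon$, which is essentially the statement being proved, so your route is arguably the cleaner one. One point worth making explicit in either proof: the hypothesis must be read as saying that $D$ contains the discontinuity points of $h$ \emph{as a function on $S$} (not merely that the restriction $h|_{S\setminus D}$ is continuous in the subspace topology) --- your step ``continuity of $h$ at the limit point $X$ gives $h(X_n)\to h(X)$'' uses exactly this stronger reading, and the lemma is false under the weaker one (e.g.\ $\mu=\delta_0$, $\mu_n=\delta_{1/n}$, $h=\mathbf{1}_D$ with $D=\{1/n:n\ge1\}$). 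Since this is how the lemma is applied in the paper (with $D=D_H$ the discontinuity set of $H$), your proof is fine as written.
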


\begin{proof}
Let $M:=\|h\|_\infty$ and let $\varepsilon>0$. Since $\mu(D)=0$, by regularity of
$\mu$ there exists an open set $U\supset D$ such that $\mu(U)<\varepsilon$.
By Portmanteau, $\limsup_{n\to\infty}\mu_n(U)\le \mu(U)$, hence for $n$ large enough
$\mu_n(U)<2\varepsilon$.

Define $h_\varepsilon:=h\,\mathbf{1}_{S\setminus U}$. Then $|h-h_\varepsilon|\le M\mathbf{1}_U$.
Therefore
\[
\left|\int h\,d\mu_n-\int h\,d\mu\right|
\le 
\left|\int h_\varepsilon\,d\mu_n-\int h_\varepsilon\,d\mu\right|
+M\mu_n(U)+M\mu(U).
\]
The last two terms are bounded by $3M\varepsilon$ for $n$ large enough.

Moreover, the discontinuity set of $h_\varepsilon$ is contained in $\partial U$.
Choosing $U$ such that $\mu(\partial U)=0$ (possible by regularity), we obtain that
$h_\varepsilon$ is $\mu$-almost surely continuous. Hence, by Portmanteau,
\[
\int h_\varepsilon\,d\mu_n \longrightarrow \int h_\varepsilon\,d\mu.
\]
We conclude by letting $\varepsilon\downarrow 0$.
\end{proof}

\subsection{Invisibility of $D_H$ along weak limits}\label{subsec:invisibility}

The next proposition shows that quasi--Feller regularity implies that weak limit points
of time averages do not charge the discontinuity set of $H$.

\begin{proposition}[Invisibility of sets killed by $Q_s$]\label{prop:invisibility-ct}
Fix $s>0$. Assume that $P_s$ is quasi--Feller with factorization
\[
P_s = Q_s\circ H.
\]
Let $x_0\in S$ and define the Ces\`aro averages of the skeleton $(P_{ns})_{n\ge 0}$ by
\[
\bar{\nu}_{n}
:=\frac{1}{n}\sum_{j=0}^{n-1} P_{js}(x_0,\cdot)\in\mathcal{P}(S),
\qquad n\ge 1.
\]
Assume that $(\bar{\nu}_{n})_{n\ge 1}$ is tight, and let $\mu$ be a weak limit point of
$(\bar{\nu}_{n})$.

Let $C\in\mathcal{B}(S)$ satisfy
\[
Q_s(w,C)=0
\qquad \forall\, w\in \bigcup_{K\in\mathcal{K}} \overline{H(K)}.
\]
Then $\mu(C)=0$. In particular, $\mu(D_H)=0$.
\end{proposition}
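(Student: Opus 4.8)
The plan is to convert reachability over the skeleton into a one-step positivity statement and then transport everything through the factor map $H$ to the space $W$, where $Q_s$ is Feller. The first observation is that Assumption~\ref{ass:qf-step}(2) is equivalent to the kernel identity $P_s(y,\cdot)=Q_s(H(y),\cdot)$ on $\mathcal{B}(S)$ (two Borel measures agreeing on $C_b(S)$ coincide). Using this together with the semigroup property, I would show that each skeleton mass vanishes: for $j\ge 1$, writing $P_{js}(x_0,C)=\int_S Q_s(H(y),C)\,P_{(j-1)s}(x_0,dy)$ and invoking tightness of the single probability measure $P_{(j-1)s}(x_0,\cdot)$ to confine $y$ to a compact $K$ (so that $H(y)\in\overline{H(K)}$, where the killing hypothesis applies), one gets $P_{js}(x_0,C)\le P_{(j-1)s}(x_0,S\setminus K)$, hence $P_{js}(x_0,C)=0$. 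Only the $j=0$ term survives, so $\bar\nu_n(C)=\tfrac1n\mathbf 1_C(x_0)\le 1/n\to 0$. The same computation yields the quantitative almost-invariance $\bar\nu_nP_s-\bar\nu_n=\tfrac1n\bigl(P_{ns}(x_0,\cdot)-\delta_{x_0}\bigr)$, so $\|\bar\nu_nP_s-\bar\nu_n\|_{\mathrm{TV}}\le 2/n$.

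The difficulty is that $\mathbf 1_C$ is not continuous, so $\bar\nu_n(C)\to0$ does not by itself force $\mu(C)=0$ under mere weak convergence. To bypass this I would push forward to $W$. Set $\lambda_n:=H_*\bar\nu_n\in\mathcal{P}(W)$; the kernel identity gives $\bar\nu_nP_s=\lambda_nQ_s$. Next I would extract a weak limit $\lambda$ of $(\lambda_n)$ along the subsequence realizing $\bar\nu_{n_k}\Rightarrow\mu$, and identify where it is carried: for each $\varepsilon$, uniform tightness of $(\bar\nu_n)$ supplies a compact $K\subseteq S$ with $\inf_n\bar\nu_n(K)>1-\varepsilon$, whence $\lambda_n(\overline{H(K)})\ge\bar\nu_n(K)>1-\varepsilon$; since $\overline{H(K)}$ is closed, the Portmanteau inequality for closed sets gives $\lambda(\overline{H(K)})\ge 1-\varepsilon$. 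As $\varepsilon\downarrow0$ this shows $\lambda$ is carried by $\bigcup_{K\in\mathcal{K}}\overline{H(K)}$, precisely the set on which $Q_s(\cdot,C)=0$.

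It then remains to identify the limit of $\lambda_nQ_s$. On one hand $\lambda_{n_k}Q_s\Rightarrow\lambda Q_s$, because for $f\in C_b(S)$ one has $Q_sf\in C_b(W)$ by Assumption~\ref{ass:qf-step}(1) and $\int Q_sf\,d\lambda_{n_k}\to\int Q_sf\,d\lambda$. On the other hand $\lambda_{n_k}Q_s=\bar\nu_{n_k}P_s$ is within $2/n_k$ of $\bar\nu_{n_k}$ in total variation, and $\bar\nu_{n_k}\Rightarrow\mu$, so $\lambda_{n_k}Q_s\Rightarrow\mu$ as well. By uniqueness of weak limits, $\mu=\lambda Q_s$, i.e.\ $\mu(A)=\int_W Q_s(w,A)\,\lambda(dw)$; this is exactly the statement that $\mu$ is $P_s$-invariant. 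Taking $A=C$ and using that $Q_s(w,C)=0$ for $\lambda$-almost every $w$ yields $\mu(C)=0$. The final claim $\mu(D_H)=0$ is the special case $C=D_H$, for which the killing hypothesis is furnished directly by Assumption~\ref{ass:qf-step}(3). Once $\mu(D_H)=0$ is known, Lemma~\ref{lem:as-continuous} applied to $P_sf=Q_sf\circ H$ gives an independent proof of invariance, and hence a second, more direct route to $\mu(C)=0$.

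I expect the main obstacle to be the extraction of the weak limit $\lambda$, that is, establishing tightness of the pushforwards $(H_*\bar\nu_n)$ on $W$: the uniform mass estimate only places $\lambda_n(\overline{H(K)})$ close to $1$ on the closed, a priori non-compact sets $\overline{H(K)}$, so transferring the uniform tightness of $(\bar\nu_n)$ through the discontinuous map $H$ requires that $H$ send compacta to relatively compact sets (equivalently, that each $\overline{H(K)}$ be compact). This properness is precisely where the quasi--Feller factorization enters, and it is the only point at which compactness rather than pure measure theory is used. A secondary technical point is the measurability of $\bigcup_{K\in\mathcal{K}}\overline{H(K)}$, which I would circumvent by working with the single closed set $\overline{H(K_\varepsilon)}$ at each level $\varepsilon$ and letting $\varepsilon\downarrow0$, so that only inner and outer regularity of $\lambda$ are invoked.
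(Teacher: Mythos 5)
Your proof is correct, but it takes a genuinely different route from the paper's. The paper fixes a closed set $F\subset C$, approximates $\mathbf 1_F$ from above by the continuous functions $h_p$, uses Dini's theorem on the compact set $\overline{H(K_\varepsilon)}$ to get $\sup_{w\in\overline{H(K_\varepsilon)}}Q_sh_p(w)\to0$ uniformly, combines this with the same $2/n$ Ces\`aro telescoping you use, and concludes $\mu(F)\le\varepsilon$ for every closed $F\subset C$, finishing by inner regularity. You instead push the averages forward to $W$, extract a weak limit $\lambda$ of $(H_*\bar\nu_{n_k})$ supported (up to $\lambda$-null sets) in $\bigcup_m\overline{H(K_{1/m})}$, and identify $\mu=\lambda Q_s$ by combining the Feller property of $Q_s$ with the total-variation bound $\|\bar\nu_nP_s-\bar\nu_n\|_{\mathrm{TV}}\le 2/n$; evaluating at $C$ then kills $\mu(C)$ in one stroke. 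Your representation $\mu=\lambda Q_s$ is strictly more informative: it delivers the invisibility of \emph{every} killed Borel set without continuous approximation or Dini, and it simultaneously yields the invariance identity $\mu P_s=\mu$ on $C_b(S)$, so it essentially absorbs Step~2 of the proof of Theorem~\ref{thm:existence-qf} as well. The price is an appeal to Prokhorov's theorem on $W$, for which you correctly identify the crucial input: each $\overline{H(K)}$ must be compact. Note that the paper's own proof relies on exactly the same property (it invokes Dini on $\overline{H(K)}$ after asserting its compactness), even though compactness of $\overline{H(K)}$ is not literally stated in Assumption~\ref{ass:qf-step}; it is an implicit part of the quasi--Feller framework of~\cite{Attali2004}, so you are on the same footing as the paper here. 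Your preliminary observation that $P_{js}(x_0,C)=0$ for all $j\ge1$ is correct and clean (tightness of the single measure $P_{(j-1)s}(x_0,\cdot)$ suffices), and you rightly note that it cannot by itself yield $\mu(C)=0$ under weak convergence; it plays no role in your final argument and could be omitted.
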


\begin{proof}
The argument is adapted from \cite{Attali2004}, where the corresponding invisibility
property is proved in discrete time for quasi--Feller chains. We include the proof
for completeness since it is the key measure-theoretic step allowing us to pass to
weak limits despite discontinuities.

Let $C$ satisfy the stated condition and let $F\subset C$ be closed.
For $p\ge 1$, define
\[
h_p(x):=1-\min\big(p\,d(x,F),1\big),
\qquad x\in S.
\]
Then $h_p\in C_b(S)$, $0\le h_p\le 1$, and $h_p\downarrow \mathbf{1}_F$ pointwise.

Fix a compact set $K\subset S$. Since $\overline{H(K)}$ is compact in $W$ and $Q_s h_p$
is continuous on $W$, dominated convergence yields $Q_s h_p(w)\downarrow Q_s(w,F)=0$
for all $w\in \overline{H(K)}$. By Dini's theorem, the convergence is uniform on
$\overline{H(K)}$, hence
\[
\sup_{w\in \overline{H(K)}} Q_s h_p(w)\xrightarrow[p\to\infty]{}0.
\]

Fix $\varepsilon>0$. By tightness of $(\bar{\nu}_n)$, there exists a compact
$K_\varepsilon\subset S$ such that $\bar{\nu}_n(K_\varepsilon^c)\le \varepsilon$
for all $n\ge 1$. Using the factorization $P_s h_p=Q_s h_p\circ H$, we obtain
\[
\bar{\nu}_n(P_s h_p)
=\int_S P_s h_p(x)\,\bar{\nu}_n(dx)
\le \sup_{x\in K_\varepsilon} P_s h_p(x)+\varepsilon
= \sup_{w\in \overline{H(K_\varepsilon)}} Q_s h_p(w)+\varepsilon.
\]

On the other hand, by definition of $\bar{\nu}_n$,
\[
\big|\bar{\nu}_n(P_s h_p)-\bar{\nu}_n(h_p)\big|
=
\frac{1}{n}\big|P_{ns}h_p(x_0)-h_p(x_0)\big|
\le \frac{2}{n}.
\]

Letting $n\to\infty$ along a subsequence such that $\bar{\nu}_n\Rightarrow \mu$, we obtain
\[
\mu(h_p)
\le \sup_{w\in \overline{H(K_\varepsilon)}} Q_s h_p(w)+\varepsilon.
\]
Taking $\limsup_{p\to\infty}$ and using the uniform convergence above yields
\[
\mu(F)\le \varepsilon.
\]
Since $\varepsilon>0$ is arbitrary, we conclude that $\mu(F)=0$ for every closed
$F\subset C$. By regularity of $\mu$, it follows that $\mu(C)=0$.
\end{proof}

\begin{remark}[Scope of the argument]
The quasi--Feller factorization is used here only as a mechanism to prove that weak limit
points of the time averages do not charge the relevant discontinuity set. Once such an
invisibility property is available by other means, the existence argument remains valid
without any quasi--Feller structure.
\end{remark}

\subsection{Existence of invariant probability measures}\label{subsec:existence-invariant}

\begin{theorem}[Existence under quasi--Feller regularity]\label{thm:existence-qf}
Let $(P_t)_{t\ge 0}$ be a Markov semigroup on the Polish space $S$.
Assume that:
\begin{enumerate}
\item Assumption~\ref{ass:qf-step} holds;
\item there exists $x_0\in S$ such that the family $\big(\Pi_T(x_0,\cdot)\big)_{T>0}$
defined by \eqref{eq:time-average} is tight.
\end{enumerate}
Then there exists at least one invariant probability measure $\mu\in\mathcal P(S)$.
\end{theorem}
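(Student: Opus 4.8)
The plan is to run the Krylov--Bogolyubov scheme on the time averages, using the quasi--Feller factorization only to push discontinuous test functions through the weak limit. First I would invoke the tightness hypothesis together with Prokhorov's theorem to extract a sequence $T_n\to\infty$ along which $\Pi_{T_n}(x_0,\cdot)\Rightarrow\mu$ for some $\mu\in\mathcal P(S)$; this $\mu$ is the candidate invariant measure, and the goal is to verify $\mu P_s=\mu$ for every $s>0$ (the case $s=0$ being trivial). The engine of the argument is the almost-invariance of the averages: for $g\in\mathcal B_b(S)$ the telescoping identity for $\frac1T\int_0^T P_{t+s}g(x_0)\,dt-\frac1T\int_0^T P_t g(x_0)\,dt$ yields the bound $\bigl|\Pi_T(x_0,P_sg)-\Pi_T(x_0,g)\bigr|\le 2s\|g\|_\infty/T$, which tends to $0$ as $T\to\infty$.

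The crux is that for $f\in C_b(S)$ the function $P_sf=Q_sf\circ H$ need not be continuous, so $\int P_sf\,d\Pi_{T_n}\to\int P_sf\,d\mu$ is not automatic. To repair this I would first establish $\mu(D_H)=0$. This is precisely the invisibility statement of Proposition~\ref{prop:invisibility-ct}, and I would obtain it by repeating that argument with the continuous-time averages $\Pi_{T_n}$ in place of the skeleton Cesàro averages $\bar\nu_n$. The only structural input used there is the almost-invariance defect, which in discrete time is $2/n$ and in continuous time is the bound $2s\|g\|_\infty/T$ just recorded; the Dini-plus-compactness step applied to $h_p\downarrow\mathbf 1_F$ is unchanged and relies only on tightness of the averaging family, which is supplied by hypothesis~2. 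This gives $\mu(C)=0$ for every $C$ killed by $Q_s$ on $\bigcup_{K\in\mathcal K}\overline{H(K)}$; by part~3 of Assumption~\ref{ass:qf-step}, $D_H$ is such a set, so $\mu(D_H)=0$.

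With $\mu(D_H)=0$ secured, the remaining steps are routine. For fixed $s>0$ and $f\in C_b(S)$, the function $P_sf=Q_sf\circ H$ is bounded, with $Q_sf\in C_b(W)$, and continuous on $S\setminus D_H$, hence $\mu$-almost surely continuous; Lemma~\ref{lem:as-continuous} then gives $\Pi_{T_n}(x_0,P_sf)\to\mu(P_sf)$. On the other hand the almost-invariance bound gives $\Pi_{T_n}(x_0,P_sf)-\Pi_{T_n}(x_0,f)\to0$, while $\Pi_{T_n}(x_0,f)\to\mu(f)$ since $f\in C_b(S)$. Combining these three limits yields $\mu(P_sf)=\mu(f)$, that is $\int f\,d(\mu P_s)=\int f\,d\mu$ for all $f\in C_b(S)$. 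Since bounded continuous functions separate Borel probability measures on a Polish space, $\mu P_s=\mu$, and as $s>0$ was arbitrary, $\mu$ is invariant.

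The main obstacle is the single step $\mu(D_H)=0$: this is where the failure of the Feller property bites, since without it one cannot commute $P_s$ with the weak limit, and it is the reason the quasi--Feller factorization is needed at all. Everything else --- extraction of the limit, the telescoping almost-invariance estimate, and the final identification via $C_b(S)$ --- is insensitive to the discontinuities and would go through for any Markov semigroup once invisibility of the discontinuity set is available. I would therefore concentrate the care on checking that continuous-time averaging does not disturb the Dini/compactness mechanism of Proposition~\ref{prop:invisibility-ct}, which it does not, precisely because that mechanism uses only tightness of the averaging family and the vanishing of the almost-invariance defect.
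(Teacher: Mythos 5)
Your proposal is correct and follows essentially the same route as the paper: Prokhorov extraction along $T_n\to\infty$, the $2s\|f\|_\infty/T$ almost-invariance bound, invisibility of $D_H$ under the weak limit, and Lemma~\ref{lem:as-continuous} to pass to the limit for $P_s f$. The only differences are minor --- you conclude from $\mu(P_s f)=\mu(f)$ on $C_b(S)$ by separation of Borel probability measures where the paper uses a monotone class argument to reach $\mathcal B_b(S)$, and you rightly observe that Proposition~\ref{prop:invisibility-ct}, stated for the skeleton Ces\`aro averages $\bar\nu_n$, must be rerun for the continuous-time averages $\Pi_{T_n}(x_0,\cdot)$ with the defect $2/n$ replaced by $2s\|h_p\|_\infty/T$, a point the paper's Step~2 passes over silently.
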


\begin{proof}
By tightness, there exist $T_n\to\infty$ and $\mu\in\mathcal P(S)$ such that
$\Pi_{T_n}(x_0,\cdot)\Rightarrow \mu$ weakly.

\medskip\noindent\textbf{Step 1: Asymptotic invariance of time averages.}
Fix $s\ge 0$ and $f\in\mathcal B_b(S)$. Using the semigroup property,
\[
\Pi_T(x_0,P_s f)
=\frac1T\int_0^T P_{t+s}f(x_0)\,dt
=\frac1T\int_s^{T+s} P_t f(x_0)\,dt.
\]
Hence
\[
\big|\Pi_T(x_0,P_s f)-\Pi_T(x_0,f)\big|
\le \frac{2s}{T}\|f\|_\infty,
\]
and therefore
\begin{equation}\label{eq:asymptotic-invariance}
\lim_{n\to\infty}\Pi_{T_n}(x_0,P_s f)=\lim_{n\to\infty}\Pi_{T_n}(x_0,f),
\end{equation}
whenever the limits exist.

\medskip\noindent\textbf{Step 2: Passage to the limit for $f\in C_b(S)$.}
Fix $s>0$ and $f\in C_b(S)$. By Assumption~\ref{ass:qf-step}, we have
\[
P_s f = Q_s f\circ H,
\]
with $Q_s f\in C_b(W)$. In particular, $P_s f$ is continuous on $S\setminus D_H$.
Moreover, by Proposition~\ref{prop:invisibility-ct}, any weak limit point $\mu$ of the time averages
does not charge $D_H$, hence $P_s f$ is $\mu$--a.s.\ continuous. Therefore Lemma~\ref{lem:as-continuous}
yields
\[
\Pi_{T_n}(x_0,P_s f)\longrightarrow \mu(P_s f)
\qquad\text{and}\qquad
\Pi_{T_n}(x_0,f)\longrightarrow \mu(f).
\]
Combining with \eqref{eq:asymptotic-invariance} gives $\mu(P_s f)=\mu(f)$ for all $s>0$
and all $f\in C_b(S)$.

\medskip\noindent\textbf{Step 3: Extension to bounded measurable functions.}
By a monotone class argument, $\mu(P_s f)=\mu(f)$ extends from $C_b(S)$ to all
$f\in\mathcal B_b(S)$. Hence $\mu P_s=\mu$ for all $s\ge 0$, i.e.\ $\mu$ is invariant
for the semigroup $(P_t)_{t\ge 0}$.
\end{proof}

\subsection{Relation with the discrete-time framework}

In discrete time, the quasi--Feller framework of Attali \cite{Attali2004} allows one to
treat Markov chains whose transition operator may fail to be continuous. The key
technical point is to prove that any weak limit point of the Ces\`aro averages does not
charge the discontinuity set $D_H$ of the factor map $H$. This ``invisibility'' property
is the central measure-theoretic ingredient allowing one to pass to the limit in the
invariance identity.

In the present continuous-time setting, the existence proof relies on the same principle.
Once tightness of the time averages is available, the only nontrivial issue is again to
ensure that discontinuities are invisible under weak limit points. The quasi--Feller
factorization provides a convenient sufficient mechanism to establish this invisibility
property, and the existence of an invariant probability measure then follows by a standard
compactness and asymptotic invariance argument.

Stronger regularity assumptions, enforcing invisibility uniformly in the initial condition,
may be viewed as structural reinforcements allowing the argument to hold in a more robust
way throughout the state space.

\subsection{From existence to uniqueness}

The results of this section are solely concerned with the existence of invariant
probability measures. They rely on tightness and measure-theoretic compactness
arguments, together with a regularity mechanism ensuring that discontinuities are
invisible under weak limit points. No irreducibility, recurrence, minorization, or
quantitative stability assumption is required at this stage.

In particular, the arguments above do not address uniqueness, nor do they provide
ergodic information about the long-time behavior of the process.

In the next sections, we turn to uniqueness from a different viewpoint. Rather than
reinforcing the hypotheses used for existence, we introduce a structural criterion
based on the normalized resolvent of the semigroup. This criterion implies that, once
existence is ensured by independent arguments, uniqueness can be derived from a
pure irreducibility assumption, without invoking Harris recurrence, return-time
estimates, or Foster--Lyapunov techniques.

\section{Uniqueness via resolvent domination}\label{sec:uniqueness}

Let $(S,\mathcal{B}(S))$ be a standard Borel space and let $(P_t)_{t\ge 0}$ be a Markov
semigroup on $S$. For $\alpha>0$ we consider the normalized resolvent kernel
\[
R_\alpha(x,dy):=\alpha\int_0^\infty e^{-\alpha t}P_t(x,dy)\,dt,
\]
which is a Markov kernel on $(S,\mathcal{B}(S))$.

At a conceptual level, the normalized resolvent provides a direct bridge between continuous
and discrete time: it is the one-step transition kernel of the process sampled at exponential
times.

\begin{remark}[Resolvent and Poissonian discretization]\label{rem:poisson-discretization}
For every $\alpha>0$, the normalized resolvent $R_\alpha$ is a Markov kernel and may be
interpreted as the transition kernel of the continuous-time process sampled at an exponential
time. More precisely, if $T$ is an exponential random variable with parameter $\alpha$,
independent of $(X_t)_{t\ge0}$ with transition semigroup $(P_t)_{t\ge0}$, then
\[
R_\alpha(x,A)=\mathbb{P}_x\big(X_T\in A\big),\qquad A\in\mathcal{B}(S).
\]
Equivalently, if $(T_n)_{n\ge1}$ are i.i.d.\ Exp$(\alpha)$ and $S_n:=T_1+\cdots+T_n$, then
the discrete-time chain $(Y_n)_{n\ge0}$ defined by $Y_n:=X_{S_n}$ is Markov with transition
kernel $R_\alpha$.
\end{remark}

\subsection{Invariant measures for the semigroup and for the resolvent}\label{subsec:inv-resolvent}

\begin{lemma}\label{lem:inv-equivalence}
A probability measure $\mu\in\mathcal{P}(S)$ is invariant for the semigroup $(P_t)_{t\ge 0}$
if and only if it is invariant for $R_\alpha$, i.e.\ $\mu R_\alpha=\mu$.
\end{lemma}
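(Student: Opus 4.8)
The plan is to prove both implications by using the defining integral representation of $R_\alpha$ together with Fubini's theorem to interchange the action of the measure $\mu$ with the time integral.

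For the forward implication, suppose $\mu P_t = \mu$ for all $t \ge 0$. The plan is to compute $\mu R_\alpha$ directly against a test function $f \in \mathcal{B}_b(S)$. Writing
\[
\int_S R_\alpha f \, d\mu = \int_S \Big(\alpha \int_0^\infty e^{-\alpha t} P_t f(x)\, dt\Big)\, \mu(dx),
\]
I would apply Fubini (justified since $|f|$ is bounded and $\alpha \int_0^\infty e^{-\alpha t}\,dt = 1$, so the integrand is absolutely integrable against the product of $\mu$ and the finite measure $\alpha e^{-\alpha t}\,dt$) to exchange the two integrals. This gives $\alpha \int_0^\infty e^{-\alpha t} \big(\int_S P_t f\, d\mu\big)\, dt$, and invariance of $\mu$ under each $P_t$ turns the inner integral into $\int_S f\, d\mu$, which factors out of the time integral to yield $\int_S f\, d\mu$. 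Hence $\mu R_\alpha = \mu$.

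For the reverse implication, the argument is less immediate: knowing $\mu R_\alpha = \mu$ for a single $\alpha$ must be promoted to $\mu P_t = \mu$ for every $t$. The cleanest route is to exploit uniqueness of Laplace transforms. For fixed $f \in \mathcal{B}_b(S)$, consider the bounded measurable function $t \mapsto g(t) := \int_S P_t f \, d\mu$. The hypothesis $\mu R_\alpha = \mu$, together with the same Fubini computation as above, states precisely that $\alpha \int_0^\infty e^{-\alpha t} g(t)\, dt = \int_S f\, d\mu = g(0)$ for this particular $\alpha$. To conclude $g(t) \equiv g(0)$ I would either invoke the identity for all $\alpha > 0$ and use injectivity of the Laplace transform on bounded functions, or—if Lemma~\ref{lem:inv-equivalence} is intended to fix $\alpha$—observe that the semigroup property gives $g(t+s) = \int_S P_t(P_s f)\, d\mu$, so that $g$ inherits enough regularity (it is the Laplace-transform-fixed-point of a bounded function) to force constancy.

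I expect the reverse direction to be the main obstacle, specifically the step of deducing $P_t$-invariance for \emph{all} $t$ from resolvent invariance. The subtlety is that $R_\alpha$ only encodes an exponentially-averaged version of the semigroup, so a priori one recovers information only about the Laplace transform. The resolution hinges on the quantifier in the statement: since the lemma asserts equivalence for a given $\alpha$, the natural reading is that $\mu R_\alpha = \mu$ for one $\alpha$ already suffices, which is true because the Laplace transform $\alpha \mapsto \alpha \int_0^\infty e^{-\alpha t} g(t)\, dt$ determines $g$ and the fixed-point relation $g = g(0)$ propagated through the semigroup identity pins down $g(t) = g(0)$ for all $t \ge 0$. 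I would make this rigorous by combining the Laplace uniqueness theorem with the continuity in $t$ of $g$ afforded by bounded convergence, thereby closing the equivalence.
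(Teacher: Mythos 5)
Your forward implication is correct and is exactly the paper's computation: Fubini (justified by boundedness of $f$ and finiteness of $\alpha e^{-\alpha t}\,dt$) followed by $P_t$-invariance of $\mu$. The issue is the reverse implication, where the paper itself only cites Ethier--Kurtz; your attempt to supply an actual argument contains a genuine gap. The hypothesis $\mu R_\alpha=\mu$ gives you the single identity $\alpha\int_0^\infty e^{-\alpha t}g(t)\,dt=g(0)$ for \emph{one} value of $\alpha$, where $g(t)=\int_S P_tf\,d\mu$. One Laplace-transform value does not determine $g$, and your two proposed fixes do not close this: you cannot ``invoke the identity for all $\alpha>0$'' because that is not what is assumed, and the assertion that the fixed-point relation combined with $g(t+s)=\int P_t(P_sf)\,d\mu$ ``forces constancy'' is stated but not proved. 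Moreover, the continuity of $g$ you claim to get ``by bounded convergence'' is not available here: for a general (non-Feller) semigroup $t\mapsto P_tf(x)$ need not be continuous --- this is precisely why the paper later has to \emph{assume} right-continuity of $t\mapsto P_t(x,A)$ in Assumption~\ref{ass:right-continuity}.

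The missing ingredient is the resolvent identity. Writing $U_\alpha:=\int_0^\infty e^{-\alpha t}P_t\,dt$ (so $R_\alpha=\alpha U_\alpha$), one has $U_\alpha-U_\beta=(\beta-\alpha)U_\alpha U_\beta$ for all $\alpha,\beta>0$. Applying $\mu$ on the left and using $\mu U_\alpha=\mu/\alpha$ yields $\mu U_\beta=\mu/\beta$, i.e.\ $\mu R_\beta=\mu$ for \emph{every} $\beta>0$. Only now does Laplace-transform injectivity apply, giving $\mu P_tf=\mu f$ for Lebesgue-a.e.\ $t>0$ for each fixed bounded measurable $f$; the upgrade to all $t$ then follows not from continuity but from the semigroup property (write $t=s+u$ with both $s$ and $u$ in the full-measure set, so $\mu P_t=(\mu P_s)P_u=\mu P_u=\mu$). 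With these two steps inserted, your outline becomes a complete proof and is in fact more self-contained than the paper's, which delegates this direction entirely to a reference.
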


\begin{proof}
If $\mu P_t=\mu$ for all $t\ge 0$, then for every bounded measurable $f$,
\[
\int_S R_\alpha f\,d\mu
=
\alpha\int_0^\infty e^{-\alpha t}\int_S P_t f\,d\mu\,dt
=
\alpha\int_0^\infty e^{-\alpha t}\int_S f\,d\mu\,dt
=
\int_S f\,d\mu,
\]
so $\mu R_\alpha=\mu$.

Conversely, assume that $\mu R_\alpha=\mu$.
Then $\mu$ is invariant for the resolvent operator in the sense that
\[
\int_S R_\alpha f\,d\mu=\int_S f\,d\mu
\qquad \forall f\in\mathcal{B}_b(S).
\]
It follows from standard resolvent--semigroup identities that this implies invariance
of $\mu$ for the semigroup $(P_t)_{t\ge 0}$; see, e.g., Ethier and Kurtz
\cite{EthierKurtz1986}. Hence $\mu P_t=\mu$ for all $t\ge 0$.
\end{proof}

\begin{remark}
In particular, uniqueness of invariant probability measures for $(P_t)_{t\ge 0}$
is equivalent to uniqueness of invariant probability measures for $R_\alpha$.
\end{remark}

\subsection{A domination criterion for uniqueness}\label{subsec:domination}

The following elementary observation will be used to deduce uniqueness from a domination
property. The point is that if a $\sigma$-finite measure is dominated by every transition
probability, then no invariant probability measure can avoid it.

\begin{proposition}[Domination forces a common reference measure]\label{prop:domination}
Let $R$ be a Markov kernel on $(S,\mathcal{B}(S))$. Assume that there exists a nonzero
$\sigma$-finite measure $\nu$ such that
\[
\nu \ll R(x,\cdot)\qquad \forall x\in S.
\]
Then every invariant probability measure $\mu$ of $R$ satisfies $\nu\ll\mu$.
\end{proposition}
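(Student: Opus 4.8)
The plan is to unpack both absolute continuity relations set by set, using the invariance identity $\mu R = \mu$ as the bridge between them. Fix an arbitrary $A \in \mathcal{B}(S)$ with $\mu(A) = 0$; the goal is to deduce $\nu(A) = 0$, which is precisely the assertion $\nu \ll \mu$.

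First I would write out invariance at the level of the single set $A$. Since $\mu$ is invariant for $R$, we have
\[
0 = \mu(A) = (\mu R)(A) = \int_S R(x, A)\, \mu(dx).
\]
The integrand $x \mapsto R(x, A)$ is nonnegative and Borel measurable (a Markov kernel is measurable in its first argument), and its $\mu$-integral vanishes. Hence $R(x, A) = 0$ for $\mu$-almost every $x$; that is, there is a Borel set $N$ with $\mu(N) = 0$ such that $R(x, A) = 0$ for all $x \in S \setminus N$.

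Next I would exploit that $\mu$ is a \emph{probability} measure: since $\mu(N) = 0$ while $\mu(S) = 1$, the set $S \setminus N$ is nonempty, so we may fix a point $x_0 \in S \setminus N$, for which $R(x_0, A) = 0$. The hypothesis supplies $\nu \ll R(x_0, \cdot)$, and since $A$ is $R(x_0, \cdot)$-null, absolute continuity forces $\nu(A) = 0$. As $A$ was an arbitrary $\mu$-null set, this establishes $\nu \ll \mu$.

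There is no substantial analytic obstacle here: the whole argument rests on the fact that the hypothesis $\nu \ll R(x, \cdot)$ holds for \emph{every} $x$, so it suffices to locate a single point $x_0$ at which the transition kernel already annihilates $A$, and invariance guarantees such a point exists off the $\mu$-null set $N$. The only step deserving a word of care is the passage from the vanishing integral to the $\mu$-a.e.\ statement, which is the standard fact that a nonnegative measurable function with zero integral vanishes almost everywhere; note that the $\sigma$-finiteness of $\nu$ is not actually used in this implication and is relevant only to later applications.
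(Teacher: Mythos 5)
Your argument is correct and follows essentially the same route as the paper's proof: use invariance to get $\int_S R(x,A)\,\mu(dx)=\mu(A)=0$, deduce $R(x,A)=0$ for $\mu$-a.e.\ $x$, pick one such $x_0$, and invoke $\nu\ll R(x_0,\cdot)$. Your write-up is in fact slightly more careful than the paper's, since you make explicit the (needed) observation that the $\mu$-null exceptional set has nonempty complement because $\mu$ is a probability measure.
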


\begin{proof}
Let $\mu$ be invariant for $R$ and let $A\in\mathcal{B}(S)$ satisfy $\mu(A)=0$.
Then $R(x,A)=0$ for $\mu$-almost every $x$, hence
\[
0=\int_S R(x,A)\,\mu(dx)=\mu R(A)=\mu(A)=0.
\]
Since $\nu\ll R(x,\cdot)$ for every $x$, it follows that $\nu(A)=0$.
Therefore $\nu\ll\mu$.
\end{proof}

\subsection{$\psi$-irreducibility and resolvent domination}\label{subsec:irred}

We now show that $\psi$-irreducibility yields a domination property for the normalized
resolvent kernel.

\begin{definition}[$\psi$-irreducibility]\label{def:psi-irred}
The Markov semigroup $(P_t)_{t\ge 0}$ is said to be \emph{$\psi$-irreducible} if there
exists a $\sigma$-finite measure $\psi$ on $(S,\mathcal{B}(S))$ such that for every
$A\in\mathcal{B}(S)$ with $\psi(A)>0$ and every $x\in S$, there exists $t>0$ such that
\[
P_t(x,A)>0.
\]
\end{definition}

\begin{assumption}[Right continuity of transition probabilities]\label{ass:right-continuity}
For every $x\in S$ and every $A\in\mathcal{B}(S)$, the map
\[
t\mapsto P_t(x,A)
\]
is right--continuous on $(0,\infty)$.
\end{assumption}

\begin{remark}
Assumption~\ref{ass:right-continuity} holds for Markov processes admitting a c\`adl\`ag
realization, in particular for diffusions, jump processes and piecewise deterministic
Markov processes.
\end{remark}

\begin{theorem}[Resolvent domination under $\psi$-irreducibility]\label{thm:resolvent-domination}
Assume that $(P_t)_{t\ge 0}$ is $\psi$-irreducible for some $\sigma$-finite measure $\psi$
and that Assumption~\ref{ass:right-continuity} holds. Then, for every $\alpha>0$,
one may take $\nu=\psi$ and obtain
\[
\nu \ll R_\alpha(x,\cdot)\qquad \forall x\in S.
\]
\end{theorem}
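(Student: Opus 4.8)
The plan is to fix $\alpha>0$ and an arbitrary $x\in S$, and to show that $\psi(A)=0$ whenever $R_\alpha(x,A)=0$, which is exactly the statement $\psi\ll R_\alpha(x,\cdot)$. I would argue by contraposition: suppose $\psi(A)>0$ and deduce $R_\alpha(x,A)>0$. Writing out the definition,
\[
R_\alpha(x,A)=\alpha\int_0^\infty e^{-\alpha t}P_t(x,A)\,dt,
\]
I observe that the integrand is nonnegative, so the integral is strictly positive precisely when $t\mapsto P_t(x,A)$ is strictly positive on a set of positive Lebesgue measure. The key point is therefore to upgrade the pointwise reachability guaranteed by $\psi$-irreducibility (there \emph{exists} some $t>0$ with $P_t(x,A)>0$) into positivity on a set of positive measure, which is where Assumption~\ref{ass:right-continuity} enters.

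The main steps are as follows. First, since $\psi(A)>0$, $\psi$-irreducibility (Definition~\ref{def:psi-irred}) furnishes a time $t_0>0$ with $P_{t_0}(x,A)>0$. Second, by Assumption~\ref{ass:right-continuity}, the map $t\mapsto P_t(x,A)$ is right-continuous at $t_0$, so there is a right-neighbourhood $[t_0,t_0+\delta)$ on which $P_t(x,A)\ge \tfrac12 P_{t_0}(x,A)>0$; this is an interval of positive Lebesgue measure $\delta>0$. Third, I would bound the resolvent integral from below by restricting to this interval:
\[
R_\alpha(x,A)\ge \alpha\int_{t_0}^{t_0+\delta} e^{-\alpha t}\,\tfrac12 P_{t_0}(x,A)\,dt
=\tfrac12 P_{t_0}(x,A)\,\bigl(e^{-\alpha t_0}-e^{-\alpha(t_0+\delta)}\bigr)>0,
\]
since every factor is strictly positive. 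Hence $R_\alpha(x,A)>0$, establishing the contrapositive. Because $x\in S$ was arbitrary, this gives $\psi\ll R_\alpha(x,\cdot)$ for all $x$, and taking $\nu=\psi$ completes the proof.

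I do not expect a serious obstacle here; the result is essentially a soft consequence of combining irreducibility with the regularity of the trajectories. The one point requiring a little care is the passage from a single reachable time to a full interval: without some regularity in $t$, a single time $t_0$ is a Lebesgue-null set and contributes nothing to the resolvent integral, so the argument genuinely needs Assumption~\ref{ass:right-continuity} (right-continuity suffices, and one-sided regularity is all that is available for càdlàg processes). The only mild subtlety is ensuring $t_0>0$ so that right-continuity is asserted on $(0,\infty)$ as stated; this is automatic since irreducibility provides a strictly positive time. Everything else is a routine lower bound on a nonnegative integral.
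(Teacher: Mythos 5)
Your proof is correct and follows essentially the same route as the paper: both use $\psi$-irreducibility to produce a time $t_0>0$ with $P_{t_0}(x,A)>0$, then invoke right-continuity to extend positivity to an interval $[t_0,t_0+\delta]$ of positive Lebesgue measure, and conclude by bounding the resolvent integral below on that interval. The explicit choice of the constant $\tfrac12 P_{t_0}(x,A)$ is just a concrete instance of the paper's generic $c>0$, so there is nothing further to add.
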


\begin{proof}
Fix $\alpha>0$ and set $\nu:=\psi$.
We show that $\psi \ll R_\alpha(x,\cdot)$ for every $x\in S$, i.e.\ that
\[
\psi(A)>0 \ \Longrightarrow\ R_\alpha(x,A)>0,
\qquad \forall x\in S,\ \forall A\in\mathcal{B}(S).
\]

Let $A\in\mathcal{B}(S)$ with $\psi(A)>0$ and fix $x\in S$.
By $\psi$-irreducibility, there exists $t_0>0$ such that $P_{t_0}(x,A)>0$.
By Assumption~\ref{ass:right-continuity}, the map $t\mapsto P_t(x,A)$ is right--continuous,
hence there exist $\varepsilon>0$ and $c>0$ such that
\[
P_t(x,A)\ge c \qquad \forall t\in[t_0,t_0+\varepsilon].
\]
Therefore,
\[
R_\alpha(x,A)
=\alpha\int_0^\infty e^{-\alpha t}P_t(x,A)\,dt
\ge \alpha c\int_{t_0}^{t_0+\varepsilon} e^{-\alpha t}\,dt
>0.
\]
This proves $\psi(A)>0 \Rightarrow R_\alpha(x,A)>0$, hence $\psi\ll R_\alpha(x,\cdot)$,
and the claim follows with $\nu=\psi$.
\end{proof}

\begin{corollary}[Uniqueness under $\psi$-irreducibility]\label{cor:uniq}
Assume that $(P_t)_{t\ge 0}$ admits at least one invariant probability measure and is
$\psi$-irreducible. Assume moreover that Assumption~\ref{ass:right-continuity} holds.
Then the invariant probability measure is unique.
\end{corollary}

\begin{proof}
Let $\alpha>0$. By Theorem~\ref{thm:resolvent-domination}, one has
$\psi\ll R_\alpha(x,\cdot)$ for every $x\in S$. Hence Proposition~\ref{prop:domination} yields
$\psi\ll\mu$ for every invariant probability measure $\mu$ of $R_\alpha$.
Since invariant probability measures of $(P_t)_{t\ge 0}$ coincide with those of $R_\alpha$,
the claim follows from the mutual singularity of distinct ergodic invariant measures on
standard Borel spaces.
\end{proof}

\begin{remark}[Regularization by the resolvent]
Beyond its measure-theoretic role in the uniqueness argument, the resolvent
\[
R_\alpha f(x)=\alpha\int_0^\infty e^{-\alpha t}P_t f(x)\,dt
\]
may be viewed as a temporal averaging operator. In particular, in several classes of
continuous-time models, $R_\alpha f$ enjoys better regularity properties than $P_t f$ for fixed
$t>0$ (e.g.\ improved continuity in $x$), even when the semigroup itself fails to be Feller.
\end{remark}

\begin{remark}[Absorbing decompositions]\label{rem:absorbing-decomp}
The domination argument above may be interpreted as ruling out the existence of two disjoint
absorbing sets for the resolvent kernel $R_\alpha$. Conversely, in coexistence regimes
where several ergodic invariant measures exist, one can construct disjoint absorbing sets for
$R_\alpha$ supporting each phase, as illustrated in Section~\ref{subsec:ips}.
\end{remark}

\section{Examples}\label{sec:examples}

This section illustrates the scope of the uniqueness criterion for several classes of
continuous-time Markov processes. In all examples below, existence of an invariant
probability measure follows from standard Lyapunov or tightness arguments, whereas
uniqueness is obtained as a direct consequence of the resolvent-based domination
principle of Corollary~\ref{cor:uniq}.

\subsection{Elliptic diffusions with irregular drift}\label{subsec:elliptic}

Consider a diffusion process on $\mathbb{R}^d$ of the form
\begin{equation}\label{eq:elliptic-diffusion}
dX_t=b(X_t)\,dt+\sigma(X_t)\,dW_t,
\end{equation}
where $(W_t)_{t\ge 0}$ is a standard $d$--dimensional Brownian motion, the diffusion
coefficient $\sigma:\mathbb{R}^d\to\mathbb{R}^{d\times d}$ is continuous and uniformly
non-degenerate, and the drift $b:\mathbb{R}^d\to\mathbb{R}^d$ is Borel measurable,
locally bounded, and possibly discontinuous.

\paragraph{Existence.}
Assume that there exists a Lyapunov function $V\in C^2(\mathbb{R}^d)$ with $V\ge 1$
and constants $c,C>0$ such that
\begin{equation}\label{eq:hasminskii-drift}
\mathcal{L}V(x)\le -cV(x)+C
\qquad \text{outside a compact set},
\end{equation}
where $\mathcal{L}$ denotes the generator of the diffusion. Under this classical
Has'minskii-type condition, the family of time averages
$\big(\Pi_T(x,\cdot)\big)_{T>0}$ is tight and the process admits at least one invariant
probability measure.

\paragraph{Uniqueness.}
Under uniform ellipticity, the process is Lebesgue--irreducible. Moreover, under
standard assumptions ensuring the existence of a transition density $p_t(x,y)$ for
all $t>0$ and its continuity in time (for instance, continuity of
$t\mapsto p_t(x,\cdot)$ in $L^1(dy)$), the mapping
\[
t\longmapsto P_t(x,A)
\]
is right--continuous for every $x\in\mathbb{R}^d$ and every Borel set $A$.
Therefore, Corollary~\ref{cor:uniq} applies and yields uniqueness of the invariant
probability measure, independently of any Harris recurrence or Feller-type regularity
assumptions, and without requiring continuity of the drift.

This example shows that, for elliptic diffusions, uniqueness is a structural consequence
of irreducibility and does not rely on additional dynamical properties.

\subsection{Degenerate Langevin diffusion}\label{subsec:langevin}

We next consider the Langevin dynamics on $\mathbb{R}^{2d}$ given by
\begin{equation}\label{eq:langevin}
\begin{cases}
dX_t = V_t\,dt,\\[0.1cm]
dV_t = -\nabla U(X_t)\,dt - \gamma V_t\,dt + \sigma\,dW_t,
\end{cases}
\end{equation}
where $\gamma>0$, $\sigma\neq 0$, $(W_t)_{t\ge 0}$ is a $d$--dimensional Brownian motion,
and $U:\mathbb{R}^d\to\mathbb{R}$ is a coercive potential.

\paragraph{Existence.}
The Hamiltonian (energy) function
\[
H(x,v)=U(x)+\tfrac12|v|^2
\]
provides a standard Lyapunov function for \eqref{eq:langevin}. Under classical
growth assumptions on $U$, it yields tightness of the time averages and hence
existence of at least one invariant probability measure.

\paragraph{Uniqueness.}
Although the diffusion is degenerate, H\"ormander's bracket condition holds under
mild regularity assumptions on $U$. As a consequence, the process admits a smooth
transition density for every $t>0$. In particular, the process is Lebesgue--irreducible,
and for every initial condition $z=(x,v)$ and every Borel set $A$ the map
$t\mapsto P_t(z,A)$ is right--continuous. Corollary~\ref{cor:uniq} then yields
uniqueness of the invariant probability measure.

This example illustrates that uniqueness can be established without appealing to
hypocoercivity, quantitative convergence rates, or other refined ergodic estimates.

\subsection{Jump processes and hybrid dynamics}\label{subsec:jumps}

The same mechanism applies to pure jump processes, L\'evy-driven stochastic
differential equations, and piecewise deterministic Markov processes (PDMPs), whose
semigroups may fail to be Feller due to discontinuities in their dynamics.

Assume that such a process admits an invariant probability measure (for instance via a
Lyapunov drift condition or compactness arguments) and that the semigroup is
$\psi$--irreducible in the sense of Definition~\ref{def:psi-irred}. Assume moreover
that Assumption~\ref{ass:right-continuity} holds; this property typically holds for
c\`adl\`ag strong Markov jump processes. Then uniqueness of the invariant probability
measure follows from Corollary~\ref{cor:uniq}.

This demonstrates the robustness of the resolvent-based uniqueness criterion for
processes with jumps and hybrid deterministic--stochastic dynamics, without requiring
Harris recurrence, return-time estimates, or minorization conditions.

\subsection{Interacting particle systems and phase coexistence}\label{subsec:ips}

Interacting particle systems on infinite lattices provide a natural class of continuous-time
Markov processes where invariant probability measures need not be unique. A canonical example is
the (infinite-volume) Glauber dynamics of the Ising model on $\mathbb{Z}^d$, with state space
\[
S=\{-1,+1\}^{\mathbb{Z}^d},
\]
endowed with the product topology and its Borel $\sigma$-field. Let $(P_t)_{t\ge 0}$ denote the
associated Markov semigroup on $S$. For $\alpha>0$, we write $R_\alpha$ for the corresponding
normalized resolvent kernel, defined by
\[
R_\alpha(x,A):=\alpha\int_0^\infty e^{-\alpha t}P_t(x,A)\,dt,
\qquad x\in S,\ A\in\mathcal{B}(S).
\]

In low-temperature regimes (where phase coexistence occurs), the dynamics typically admits at
least two distinct ergodic invariant probability measures, often denoted by $\mu^+$ and $\mu^-$,
corresponding to the plus and minus phases. Note that both $\mu^+$ and $\mu^-$ charge every
cylinder event, hence share the same topological support; the separation of phases is therefore
genuinely measurable (tail-type) rather than topological.

\paragraph{Existence.}
Existence of invariant probability measures for such systems can be obtained by standard
compactness arguments on the configuration space $S$, combined with tightness of time averages,
or by direct constructions of Gibbs measures. In particular, existence does not rely on Feller
regularity and may hold even when the dynamics exhibits strong discontinuities in the initial
condition in infinite volume.

\paragraph{Non-uniqueness and absorbing decompositions for the resolvent.}
Fix $\alpha>0$ and assume that the dynamics admits two distinct ergodic invariant probability
measures $\mu^+$ and $\mu^-$. By Lemma~\ref{lem:inv-equivalence}, both $\mu^+$ and $\mu^-$ are also
invariant probability measures for the Markov kernel $R_\alpha$.

Since $S$ is a standard Borel space and $\mu^+\perp \mu^-$, there exists a measurable set
$C\in\mathcal{B}(S)$ such that
\[
\mu^+(C)=1,
\qquad
\mu^-(C)=0.
\]
Define the measurable sets
\[
B^+:=\bigcap_{n\ge 0}\Big\{x\in S:\ R_\alpha^n(x,C)=1\Big\},
\qquad
B^-:=\bigcap_{n\ge 0}\Big\{x\in S:\ R_\alpha^n(x,C^c)=1\Big\}.
\]
Then $B^+$ and $B^-$ are disjoint and absorbing for $R_\alpha$, i.e.
\[
R_\alpha(x,B^\pm)=1,
\qquad \forall x\in B^\pm.
\]
Moreover, one has
\[
\mu^+(B^+)=1,
\qquad
\mu^-(B^-)=1,
\]
see \cite{Attali2026}. In particular, phase coexistence forces the existence of \emph{true}
absorbing components for the resolvent kernel (not merely absorbing behavior in probability).

\paragraph{Perspectives.}
The separating set $C$ above is obtained abstractly from mutual singularity. In concrete
interacting particle systems, it is natural to ask whether one can choose $C$ in terms of a more
explicit observable, for instance a tail event or a macroscopic order parameter related to the
magnetization.

A natural candidate (in the coexistence regime) is to consider tail-type events describing the
sign of the macroscopic magnetization, such as
\[
C:=\left\{\sigma\in S:\ \liminf_{n\to\infty}\frac{1}{|B_n|}\sum_{i\in B_n}\sigma_i \ge m^\ast\right\}.
\]
where $(B_n)$ is an increasing sequence of boxes in $\mathbb{Z}^d$ and $m^\ast>0$ denotes the
spontaneous magnetization. Identifying absorbing components in terms of such physically
meaningful events would provide a more concrete interpretation of resolvent absorption and would
complement classical dynamical approaches to phase coexistence and metastability.

\section{Discussion and perspectives}\label{sec:discussion}

The results of this paper clarify the respective roles played by existence, uniqueness
and long-time ergodic properties for continuous-time Markov processes. Rather than treating 
these questions within a single unified dynamical framework,
we have separated the underlying mechanisms and shown how they can be combined
in a modular, step-by-step manner.

Existence of invariant probability measures is obtained from tightness of time averages,
together with an \emph{invisibility} property ensuring that discontinuities of the transition
operator do not obstruct the invariance identity under weak limit points.Uniqueness, 
on the other hand, follows from a purely structural
argument based on the normalized resolvent of the semigroup and an irreducibility
assumption. Finally, when Lyapunov-type stability estimates are available, one can
combine them with irreducibility to recover recurrence properties and strong laws of
large numbers, without invoking local minorization or small-set assumptions.

\subsection{Relation with Harris recurrence}\label{subsec:discussion-harris}

Harris recurrence provides a powerful and unifying framework yielding existence,
uniqueness and ergodic theorems for a wide class of Markov processes. By combining
irreducibility with quantitative return-time estimates, it leads to stability and
convergence results and has become a cornerstone of the modern theory.

The present work shows that several components of this theory can be disentangled.
In particular, uniqueness of the invariant probability measure does not require a
recurrence assumption. Once existence has been established by independent tightness
arguments, irreducibility alone, combined with the resolvent structure of the semigroup,
rules out the coexistence of several invariant probability measures. In this sense, 
Harris recurrence should be viewed as a powerful sufficient condition, but not a necessary mechanism for uniqueness.

Moreover, the ergodic consequences discussed here do not rely on local minorization or
petite-set conditions. Instead, irreducibility is used as a global communication property,
while Lyapunov stability provides the dynamical input needed to control time averages.
This viewpoint makes it possible to recover recurrence in the sense of infinite visits of
sets with positive invariant measure, as well as strong laws of large numbers, without
invoking the full Harris machinery.

\subsection{Uniqueness and ergodicity}\label{subsec:discussion-ergodicity}

In the present framework, ergodicity is most naturally understood through ergodic
decomposition. When the invariant probability measure is unique, it is necessarily
ergodic, even in the absence of quantitative mixing or convergence rates. This notion of
ergodicity is therefore structural rather than quantitative.

When combined with Lyapunov stability, this structural uniqueness can be upgraded to
stronger long-time results. In particular, the same stability mechanisms ensuring
tightness and existence may also yield recurrence properties and almost sure convergence
of time averages for broad classes of initial conditions. This mirrors the discrete-time
situation and shows that strong ergodic conclusions need not rely on local small-set or
minorization assumptions.

\subsection{Possible extensions}\label{subsec:discussion-extensions}

Several extensions of the present work may be considered. First, quantitative convergence
properties could be investigated once an additional contraction mechanism is available,
for instance in Wasserstein-type distances or weighted total variation norms. Second, the
resolvent-based domination argument may be adapted to non-homogeneous Markov processes or
to models evolving in random environments. Finally, extensions to infinite-dimensional
systems, where both irreducibility and Lyapunov stability raise subtle issues, remain an
open and challenging direction for future research.

\section{Conclusion}\label{sec:conclusion}

We have shown that, in continuous time, uniqueness of invariant probability measures for
Markov processes follows as a structural consequence of irreducibility, once existence has
been established by independent arguments. The key mechanism relies on the normalized
resolvent of the Markov semigroup, which transforms finite-time reachability into a one-step
domination property at the level of measures.

This approach avoids Harris recurrence, return-time estimates, and regularity assumptions
such as the Feller property. It applies naturally to a wide class of continuous-time models,
including diffusions, jump processes and hybrid models with discontinuous and non-Feller
dynamics.

More broadly, our results highlight a clear conceptual separation between existence,
uniqueness and ergodic properties, and suggest that resolvent-based arguments provide a
natural framework for studying invariant measures beyond the classical recurrence paradigm.

\bibliography{ergodic}

\end{document}